\def\MM{\mathcal{M}}
\newtheorem{thm}{Theorem}[section]
\newtheorem{lem}[thm]{Lemma}
\theoremstyle{definition}
\newtheorem{rem}[thm]{Remark}
\title{The mutation class of $D_n$ quivers}
\author{Dagfinn F. Vatne}
\address{Institutt for matematiske fag, Norges teknisk-naturvitenskapelige
  universitet, N-7491 Trondheim, Norway}
\email{dvatne@math.ntnu.no}
\begin{document}

\maketitle

\begin{abstract}
We give an explicit description of the mutation classes of quivers of type
$D$.
\end{abstract}

\section*{Introduction}

In their very influential work on cluster algebras, Fomin and Zelevinsky
defined the notion of \emph{matrix mutation} \cite{fz1}. When applied to
skew-symmetric matrices, this can be interpreted as an operation on quivers
(i.e.\ directed graphs), and this is called \emph{quiver mutation}. The
quivers whose underlying graphs are simply laced Dynkin diagrams have a special
significance in the cluster algebra theory, as they appear in the finite type
classification \cite{fz2}.

The purpose of this note is to give an explicit description of the
\emph{mutation class} of $D_n$ quivers, for $n\geq 4$. That is, we will
present the set of quivers which can be obtained by iterated mutation on a
quiver whose underlying graph is of Dynkin type $D$. It turns out that the
quivers in these mutation classes are easily recognisable. In particular, our
result gives a complete description of the cluster-tilted algebras of type
$D$, see \cite{bmr1,bmr2,ccs2}.

The shape of these quivers can be deduced from Schiffler's geometric model for
the cluster categories of type $D$ \cite{sch}. Nevertheless, it is convenient
to have an explicit description. The method used in this paper to obtain the
description is purely combinatorial, and no prerequisites are needed.

In order to understand these mutation classes, it is necessary to understand the
mutation classes of quivers of Dynkin type $A$. These are explicitly
described in \cite{seven} and \cite{bv}, and are also implicit in
\cite{ccs}. They will be recalled in Section \ref{sec:mutationclass}.

\section{Quivers and mutation} \label{sec:quivermutation}

In this section we will briefly recall the definition of quiver mutation from
\cite{fz1}. Before that, we fix some standard terminology about quivers which
will be used here.

A \emph{quiver} $Q$ is a directed graph, that is, a quadruple
$(Q_0,Q_1,h,t)$ consisting of two sets $Q_0$ and $Q_1$, whose elements
are called \emph{vertices} and \emph{arrows} respectively, and two functions
$h,t:Q_1\to Q_0$ (assigning a ``head'' and a ``tail'' to each arrow). We
will often think of $Q$ as the union of $Q_0$ and $Q_1$, and keep in mind
that each element of $Q_1$ connects two vertices and has a direction. If
either $h(\alpha)=i$ or $t(\alpha)=i$, we say that $\alpha$ is \emph{incident}
with $i$. Moreover, if $t(\alpha)=j$ and $h(\alpha)=i$, we will say that
$\alpha$ is an arrow \emph{from $j$ to $i$}.

For any vertex $i$ in $Q_0$, the \emph{valency} of $i$ (in $Q$) is the number
of neighbouring vertices, i.e.\ the number of vertices $j\not =i$ such that
there exists an arrow $\alpha \in Q_1$ with either $h(\alpha)=i$ and
$t(\alpha)=j$ or vice versa.

We will assume throughout that quivers do not have loops or 2-cycles. In other
words, for any $\alpha \in Q_1$, we have that $h(\alpha) \not = t(\alpha)$ and
there does not exist $\beta \in Q_1$ such that $h(\beta)=t(\alpha)$ and
$t(\beta)=h(\alpha)$.

For a quiver $Q=(Q_0,Q_1,h,t)$ and a vertex $i\in Q_0$, the quiver
$\mu_i(Q)=(Q_0^*,Q_1^*,h^*,t^*)$ is obtained by making the following changes
to $Q$:
\begin{itemize}
\item All arrows incident with $i$ are reversed, i.e.\ $h^*(\alpha)=t(\alpha)$
  and $t^*(\alpha)=h(\alpha)$ for such arrows.
\item Whenever $j,k\in Q_0$ are such that there are $r>0$ arrows from $j$ to
  $i$ (in $Q$) and $s>0$ arrows from $i$ to $k$ (in $Q$), first add $rs$ arrows
  from $j$ to $k$. Then remove a maximal number of 2-cycles.
\end{itemize}
There is a choice involved in the last step in this procedure, but this will
not concern us, as the possible resulting quivers are isomorphic as quivers,
and will be regarded as equal. The quiver $\mu_i(Q)$ so defined is said to be
the quiver obtained from $Q$ by \emph{mutation} at $i$.

Mutation at a vertex is an involution, that is, $\mu_i(\mu_i(Q))=Q$. It follows
that mutation generates an equivalence relation on quivers. Two quivers are
\emph{mutation equivalent} if one can be obtained from the other by some
sequence of mutations. An equivalence class will be called a \emph{mutation
  class}.

The following lemma is well known and easily verified:

\begin{lem}
If the quivers $Q_1$ and $Q_2$ both have the same underlying graph $T$, and
$T$ is a tree, then $Q_1$ and $Q_2$ are mutation equivalent.
\end{lem}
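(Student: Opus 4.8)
The plan is to reduce everything to mutations at sources and sinks, and then to connect any two orientations of $T$ by such mutations via induction on the number of vertices. The crucial observation that makes sources and sinks special is this: if $i$ is a \emph{sink} of a quiver $Q$ (every incident arrow has head $i$), then there is no arrow from $i$ to any vertex $k$, so the second step in the mutation procedure adds no arrows; dually for a \emph{source}. Hence mutation at a source or a sink creates no new arrows and cancels none, so it preserves the underlying graph and acts simply by reversing the orientation of every edge incident with $i$. In particular such a mutation sends one orientation of $T$ to another orientation of $T$. Since any sequence of mutations witnesses mutation equivalence, it therefore suffices to show that any two orientations $Q_1, Q_2$ of the fixed tree $T$ can be transformed into one another by mutations at sources and sinks.

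I would prove this by induction on $n = |T_0|$, the case $n \le 2$ being immediate (for $n=2$ the single edge can be reversed by mutating at either endpoint, each of which is a source or a sink). For $n > 2$, choose a leaf $v$ of $T$, with unique neighbour $u$ and pendant edge $e = \{u,v\}$, and let $T'$ be the tree obtained from $T$ by deleting $v$. Deleting $v$ turns $Q_1, Q_2$ into orientations $Q_1', Q_2'$ of $T'$; by the inductive hypothesis there is a sequence of source/sink mutations on $T'$ carrying $Q_1'$ to $Q_2'$. I would then lift this sequence to $T$ one mutation at a time. A mutation at a vertex $w \neq u$ involves no edge at $v$, so $w$ is a source (resp.\ sink) in $T$ exactly when it is in $T'$, and the lifted mutation reproduces the prescribed effect on $T'$ while leaving $e$ untouched.

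The one genuine obstacle is a mutation at the neighbour $u$: a vertex that is a source or sink of $T'$ need not be one of $T$, because of the extra edge $e$. This is resolved by the leaf itself, since $v$ has valency $1$ and is therefore always a source or a sink, with mutation at $v$ reversing $e$ and nothing else. Thus, before each mutation at $u$, I would (if necessary) first mutate at $v$ to orient $e$ so that $u$ becomes a genuine source or sink of $T$; the subsequent mutation at $u$ then realises the required reversal on $T'$, its only side effect being to flip $e$ once more, which I track throughout. After running through the whole lifted sequence the restriction to $T'$ agrees with $Q_2'$, and if $e$ is not yet oriented as in $Q_2$ a final mutation at $v$ corrects it without disturbing $T'$. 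The resulting quiver then equals $Q_2$, completing the induction.
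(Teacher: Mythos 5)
Your proof is correct and complete. The paper itself gives no argument for this lemma (it is dismissed as ``well known and easily verified''), so there is nothing to compare against line by line; what you have written is the standard folklore argument, correctly executed. The two key points are both handled properly: mutation at a source or sink adds no composite arrows and hence merely reverses the incident edges, and the leaf $v$ serves as a ``switch'' that lets you reorient the pendant edge at will so that every prescribed mutation at its neighbour $u$ can be performed at a genuine source or sink of the full quiver. Note that your induction correctly establishes the stronger statement (connectedness by source/sink mutations alone), which is what the inductive step actually requires, rather than mere mutation equivalence.
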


In particular, by this lemma, it makes sense to speak of the mutation classes
of the simply laced Dynkin diagrams. In this paper we will be concerned with
the mutation classes of type $A_k$ quivers:
\begin{equation} \label{eq:typeAquiver}
\xymatrix{
1\ar[r]&2\ar[r]&3\ar[r]&\cdots\ar[r]&(k-1)\ar[r]&k
}
\end{equation}
and with the mutation classes of type $D_n$ quivers:
\begin{equation} \label{eq:typeDquiver}
\xymatrix{
&&&&&(n-1)\\
1\ar[r]&2\ar[r]&3\ar[r]&\cdots\ar[r]&(n-2)\ar[ur]\ar[dr]&\\
&&&&&n
}
\end{equation}
By the cluster algebra finite-type classification \cite{fz2}, we know that
these mutation classes are finite.

\section{The set $\MM^D_n$} \label{sec:mutationclass}

Let first $\MM^A_k$ be the mutation class of $A_k$. This set of quivers
consists of the connected quivers that satisfy the following \cite{bv}:
\begin{itemize}
\item there are $k$ vertices,
\item all non-trivial cycles are oriented and of length 3,
\item a vertex has valency at most four,
\item if a vertex has valency four, then two of its adjacent arrows belong
  to one 3-cycle, and the other two belong to another 3-cycle,
\item if a vertex has valency three, then two of its adjacent arrows belong to
  a 3-cycle, and the third arrow does not belong to any 3-cycle
\end{itemize}
By a \emph{cycle} in the first condition we mean a cycle in the underlying
graph, not passing through the same edge twice. The union of all $\MM^A_k$ for
all $k$ will be denoted by $\MM^A$. Note that for a quiver $\Gamma$ in
$\MM^A$, any connected subquiver of $\Gamma$ is also in $\MM^A$.

For a quiver $\Gamma$ in $\MM^A$, we will say that a vertex $v$ is a
\emph{connecting} vertex if $v$ has valency at most 2 and, moreover, if $v$ has
valency 2, then $v$ is a vertex in a 3-cycle in $\Gamma$.

We now define a class $\MM^D_n$ of quivers which will be shown to be the
mutation class of $D_n$. We define $\MM^D_n$ to be the set of quivers $Q$ with
$n$ vertices belonging to one of the following four types:

\textbf{Type I:} $Q$ has two vertices $a$ and $b$ which have valency
one and both $a$ and $b$ have an arrow to or from the same vertex $c$, and
$Q'=Q\backslash \{ a,b\}$ is in $\MM^A_{n-2}$ and $c$ is a connecting vertex
for $Q'$.
\begin{center}
\includegraphics[height=3cm]{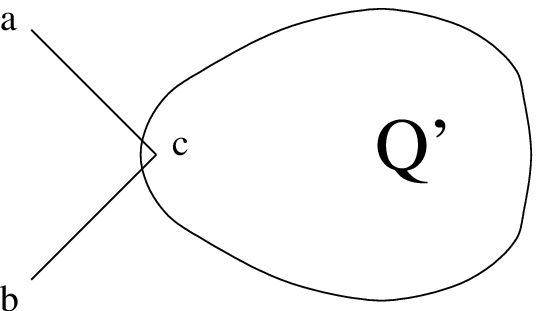}
\end{center}

\textbf{Type II:} $Q$ has a full subquiver $Q_1$ with four vertices which
looks like
\begin{center}
\includegraphics[height=2cm]{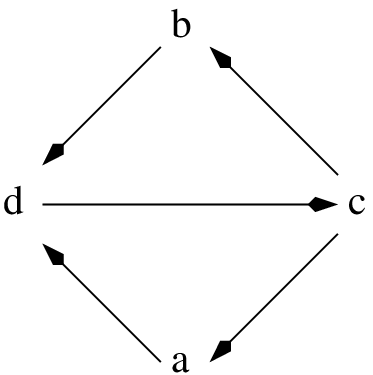}
\end{center}
where the vertices $a$ and $b$ have valency 2 in $Q$, and $Q\backslash
\{a,b,d\to c\}=Q' \overset{\cdot}{\cup} Q''$ is disconnected with two
components $Q'$ and $Q''$ which are both in $\MM^A$ and for which $c$ and $d$
are connecting vertices.
\begin{center}
\includegraphics[height=3cm]{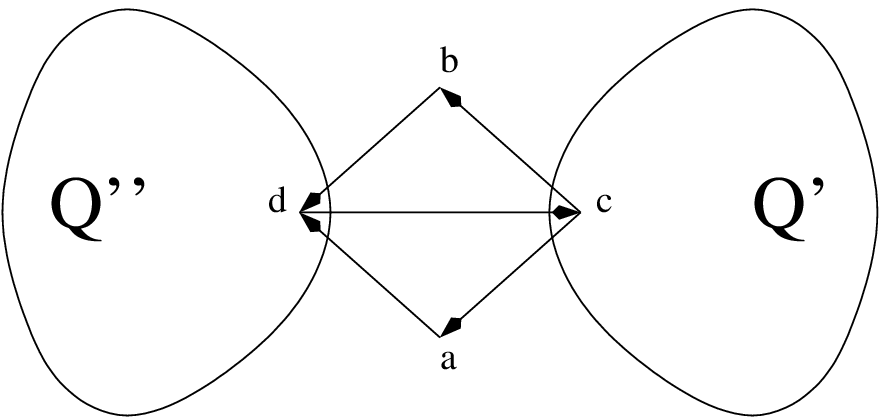}
\end{center}

\textbf{Type III:} $Q$ has a full subquiver which is a directed 4-cycle:
\begin{center}
\includegraphics[height=2cm]{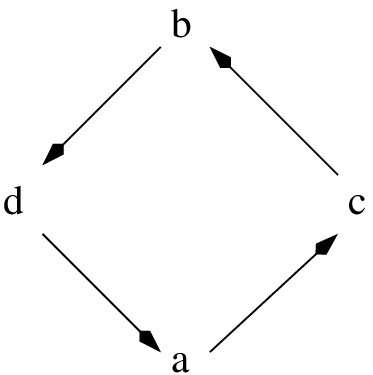}
\end{center}
and $Q\backslash \{a,b\}=Q' \overset{\cdot}{\cup} Q''$ is disconnected with two
components $Q'$ and $Q''$ which are both in $\MM^A$ and for which $c$ and $d$
are connecting vertices, like for Type II.
\begin{center}
\includegraphics[height=3cm]{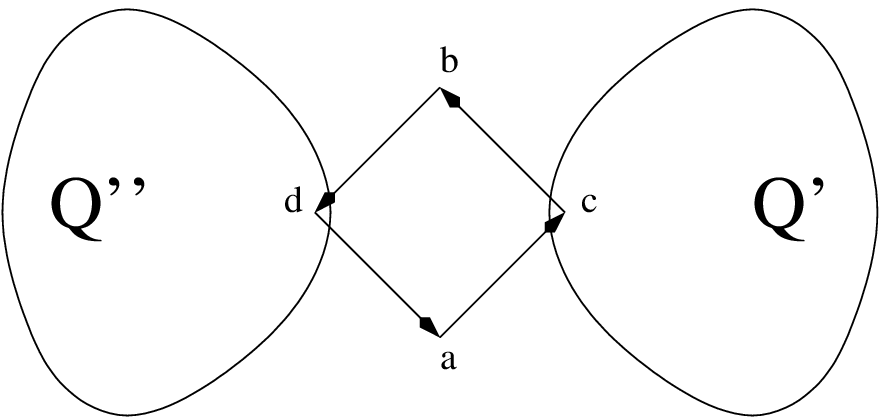}
\end{center}

\textbf{Type IV:} $Q$ has a full subquiver which is a directed $k$-cycle,
where $k\geq 3$. We will call this the \emph{central} cycle. For each arrow
$\alpha :a\to b$ in the central cycle, there may (and may not) be a vertex
$c_{\alpha}$ which is not on the central cycle, such that there is an oriented
3-cycle traversing $a\overset{\alpha}{\to} b\to c_{\alpha}\to a$. Moreover,
this 3-cycle is a full subquiver. Such a 3-cycle will be called a
\emph{spike}. There are no more arrows starting or ending in vertices on the
central cycle.

Now $Q\backslash \{\textrm{vertices in the central
  cycle and their incident arrows}\}=Q'\overset{\cdot}{\cup}
Q''\overset{\cdot}{\cup} Q'''\overset{\cdot}{\cup}\cdots$ is a disconnected
union of quivers, one for each spike, which are all in $\MM^A$, and for which
the corresponding vertex $c$ is a connecting vertex:
\begin{center}
\includegraphics[height=6.5cm]{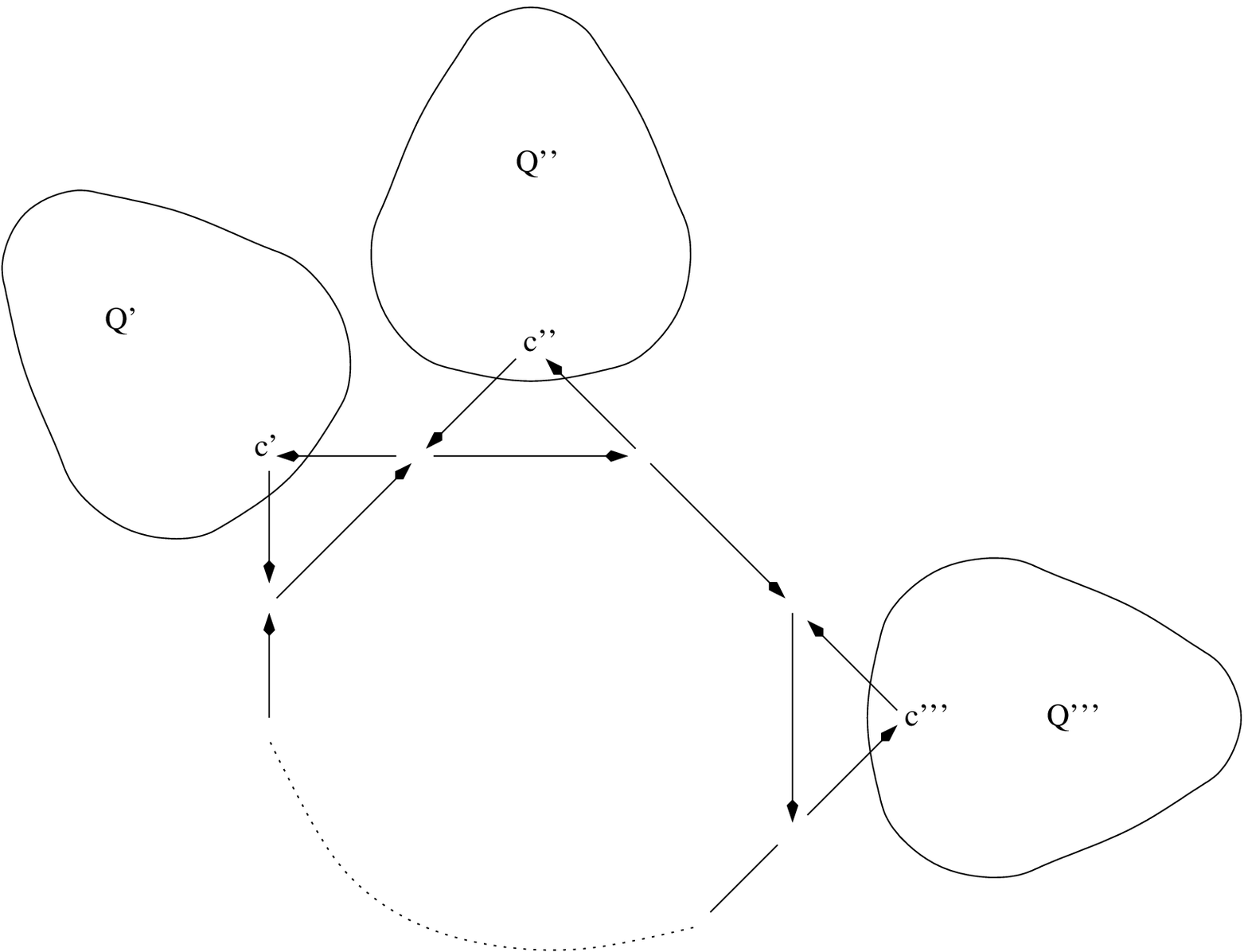}
\end{center}

\begin{rem}
One should note that in Types II, III and IV, the subquivers $Q', Q'',...$ can
be in the set $\MM^A_1$, i.e. they can have only one vertex.
\end{rem}

\begin{rem}
Ringel \cite{r} has described all self-injective cluster tilted
algebras, and in particular shown that they are of type $D$. Their Gabriel
quivers are all Type IV in our description; they are the special cases with
either no spikes, or with a maximal number of spikes and with the subquivers
$Q',Q'',...$ with only one vertex.
\end{rem}

\section{Proof that $\MM^D_n$ is the mutation class of $D_n$}

The main result of this paper is the following:

\begin{thm} \label{thm:mutationclass}
For any $n\geq 4$, a quiver $Q$ is mutation equivalent to $D_n$ if and only if
it is in $\MM^D_n$.
\end{thm}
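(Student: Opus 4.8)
The plan is to prove both inclusions by induction on $n$. For the "only if" direction, I would show that $\MM^D_n$ is closed under mutation and contains the standard $D_n$ quiver \eqref{eq:typeDquiver}; since mutation generates the equivalence relation, this gives that every quiver mutation equivalent to $D_n$ lands in $\MM^D_n$. For the "if" direction, I would show that every quiver in $\MM^D_n$ can be reduced, by a sequence of mutations, to the standard $D_n$ quiver. The base case $n=4$ can be checked directly by enumerating the (finite) mutation class. The key technical tool throughout is the explicit description of $\MM^A_k$ recalled in Section \ref{sec:mutationclass}, together with the notion of connecting vertex.

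**The "only if" direction (closure under mutation).**
First I would verify that the standard $D_n$ quiver is of Type I (deleting the two valency-one vertices $n-1,n$ leaves a linearly oriented $A_{n-2}$ quiver, with $n-2$ a connecting vertex). The main work is a case analysis: take an arbitrary $Q \in \MM^D_n$ and an arbitrary vertex $v$, and show $\mu_v(Q) \in \MM^D_n$ again. I would organize this by the type of $Q$ and the location of $v$ (on a spike, on the central cycle, at a connecting vertex, inside one of the attached $\MM^A$-pieces $Q', Q'', \dots$, etc.). The central observation driving this is that mutation at a vertex lying strictly inside some $Q'$ (away from the distinguished connecting vertex) only affects $Q'$, and since $\MM^A$ is closed under mutation, the type of $Q$ is preserved; the interesting mutations are those at or near the connecting vertices and on the central cycle, which is where types get interchanged (for instance, mutating appropriately turns a Type III or IV quiver into another, or relates Types I and II). I expect this case analysis to be the most laborious part, requiring careful bookkeeping of how valencies, 3-cycles, and the central cycle transform; in particular one must check that no forbidden configuration (a non-oriented cycle, a valency-$\geq 5$ vertex, a 2-cycle) is ever created, and that the connecting-vertex conditions are maintained.

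**The "if" direction (reduction to the standard quiver).**
Here I would argue that any $Q \in \MM^D_n$ is mutation equivalent to $D_n$ by exhibiting mutations that simplify $Q$ toward the standard form. The natural approach is to reduce to the "if" direction for type $A$, which is already established in \cite{bv}: for Types I, II, III, I would mutate to "unfold" the distinguished subquiver, reducing $Q$ to a smaller-rank quiver in $\MM^D$ (or to a type $A$ quiver whose mutation class is understood) and invoke the inductive hypothesis. The cleanest route is probably to show that every quiver of Types II, III, IV can be mutated into a Type I quiver, and that every Type I quiver is easily mutated to the standard $D_n$ by first straightening the $\MM^A_{n-2}$ piece (using the type $A$ result) and then repositioning the two valency-one vertices. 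For Type IV the key step is collapsing the central cycle: mutating at a suitable vertex on a central cycle of length $k$ either removes a spike or shortens the cycle, so by repeated mutation one shrinks to the $k=3$ case, which coincides with Type III.

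**The main obstacle.**
The hardest part will be the closure argument in the "only if" direction, specifically the mutations at vertices on or adjacent to the central cycle in Type IV. These are the mutations that change the combinatorial type of the quiver, and one must verify exhaustively that the resulting quiver still satisfies one of the four type descriptions — including the global conditions that the attached pieces lie in $\MM^A$ and that the relevant vertices remain connecting. I would need a clean lemma describing precisely how mutation at a central-cycle vertex acts (whether it has a spike, and whether its neighbours do), as this single computation controls the transitions I$\leftrightarrow$II, III$\leftrightarrow$IV, and the length changes of the central cycle; getting this local picture exactly right, and confirming it dovetails with the connecting-vertex hypotheses, is where the real care is required.
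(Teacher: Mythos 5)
Your overall architecture is exactly the paper's: one direction by showing $\MM^D_n$ contains the standard $D_n$ quiver and is closed under mutation (a case analysis by type and by location of the mutation vertex, with the observation that mutations strictly inside a type $A$ piece are harmless), and the other direction by explicitly mutating each of the four types down to the standard quiver using a lemma that straightens an $\MM^A$ piece to a linear orientation while avoiding its connecting vertex. The one place you diverge is the Type IV reduction: the paper goes the opposite way, \emph{enlarging} the central cycle by absorbing each spike and its attached $A$-piece (mutate the piece to a linear quiver, then mutate along it) until the whole quiver is an oriented $n$-cycle, which is then shown mutation equivalent to $D_n$ by an explicit sequence; you propose \emph{shrinking} the central cycle instead. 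Your route can be made to work, but it contains one genuine error: a Type IV quiver with central cycle of length $3$ does \emph{not} coincide with Type III (Type III is built around a directed $4$-cycle with no spikes on it; Type IV with $k=3$ is a directed $3$-cycle with optional spikes). You need one further mutation at a central-cycle vertex to pass from the $k=3$ Type IV quiver to a Type III or Type I quiver, and you must then verify that shortening the cycle at each stage really preserves the Type IV conditions (spikes and pieces at the mutated vertex get rearranged) — which is essentially the same casework as in the closure direction, so the paper's expansion argument is arguably cleaner. Also note that no induction on $n$ is needed for the main theorem; the only induction in the paper is on $k$ inside the type $A$ straightening lemma.
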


We will break the proof of Theorem \ref{thm:mutationclass} into small
lemmas. The first takes care of the type $A$ pieces.

\begin{lem} \label{lem:typeApiece}
Let $\Gamma \in \MM^A_k$ for some $k\geq 2$, and let $c$ be a connecting
vertex for $\Gamma$. Then there exists a sequence of mutations on $\Gamma$
satisfying the following:
\begin{itemize}
\item $\mu_c$ does not appear in the sequence
\item the resulting quiver is isomorphic with the quiver in
  \eqref{eq:typeAquiver}
\item under this isomorphism, $c$ is relabelled as 1
\end{itemize}
\end{lem}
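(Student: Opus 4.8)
The plan is to prove the lemma by strong induction on $k$, peeling $c$ off to an endpoint of the desired line. The base case $k=2$ is immediate: $\Gamma$ is a single arrow between $c$ and its neighbour, and at most one mutation at the neighbour (never at $c$) produces $c\to 2$ after relabelling. For the inductive step I would distinguish the two possible valencies of $c$. Suppose first that $c$ has valency $1$, with unique neighbour $d$. Deleting $c$ gives a connected subquiver $\Gamma'=\Gamma\setminus\{c\}\in\MM^A_{k-1}$, and a short check of the valency conditions shows $d$ is connecting in $\Gamma'$: the arrow to $c$ lies in no $3$-cycle, so $d$ has valency at most $3$ in $\Gamma$, and its remaining arrows in $\Gamma'$ either form a $3$-cycle (if $d$ had valency $3$) or leave $d$ of valency $1$ (if $d$ had valency $2$). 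By induction there is a sequence avoiding $\mu_d$ turning $\Gamma'$ into $d\to 2\to\cdots\to(k-1)$.

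Since $c$ is adjacent only to $d$ and the sequence never mutates at $d$ (nor at $c$), none of these mutations touch $c$ or the arrow joining $c$ and $d$; performing the same sequence on $\Gamma$ yields the line $d\to 2\to\cdots\to(k-1)$ with $c$ still attached to $d$. If that arrow is $c\to d$ we are done after relabelling. If it is $d\to c$, then $d$ is a source, and mutating successively at $d,2,3,\dots,(k-1)$ — each of which is, when reached, a source of the current path, so that the mutation merely reverses its two arrows without creating a $3$-cycle — sweeps the defect off the right-hand end and produces $c\to d\to 2\to\cdots\to(k-1)$, never mutating at $c$.

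Now suppose $c$ has valency $2$; then $c$ lies in an oriented $3$-cycle $c\to d\to e\to c$ and has no further neighbours. I would reduce to the previous case by a \emph{detaching} mutation at $d$ or $e$ (never at $c$) producing a quiver still in $\MM^A_k$ in which $c$ has valency $1$. Mutating at $e$ removes the arrow joining $c$ and $d$ — the shortcut $d\to c$ cancels $c\to d$ — and reverses the arrow joining $e$ and $c$; one checks this leaves $c$ of valency $1$ exactly when $d$ is the unique in-neighbour of $e$, and symmetrically mutation at $d$ detaches $c$ exactly when $e$ is the unique out-neighbour of $d$. If either condition holds we drop into the valency-$1$ case; otherwise both $d$ and $e$ belong to a second $3$-cycle, and I would first \emph{rotate}.

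The rotation step, and the proof that it terminates, is the main obstacle. Mutating at $e$ in the remaining case replaces the cycle through $c$ by a new oriented $3$-cycle $c\to e\to w\to c$, where $w$ is the in-neighbour of $e$ belonging to its second $3$-cycle; this mutation does not involve $c$ and keeps $c$ connecting. To see that iterating it eventually exposes a clean detachment, I would use the structural fact that a quiver in $\MM^A$ has no non-trivial cycles other than its $3$-cycles, so its $3$-cycle blocks are arranged in a tree. A leaf of this tree is a $3$-cycle meeting the rest of the quiver in a single vertex, hence containing two vertices of valency $2$, for which the detaching criterion above is automatically met. The content of the argument is then that each rotation moves the $3$-cycle containing $c$ one step along the block tree toward a leaf, strictly decreasing its distance to the nearest leaf block, so that after finitely many rotations $c$ lies in a leaf $3$-cycle and a single further mutation detaches it. Once $c$ has valency $1$ we finish by the valency-$1$ case, and the induction is complete.
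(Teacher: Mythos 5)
Your valency-$1$ case is essentially the paper's argument and is fine. The valency-$2$ case, however, departs from the paper and is where the genuine gap lies. First, a concrete error: from the failure of both detaching criteria you conclude that ``both $d$ and $e$ belong to a second $3$-cycle.'' This is false in $\MM^A$: the vertex $e$ may have valency $3$ with a pendant extra arrow $w\to e$ that lies in no $3$-cycle at all (the $\MM^A$ axioms explicitly allow a valency-$3$ vertex whose third arrow is in no $3$-cycle), and likewise $d$ may have a pendant out-arrow. The rotation $\mu_e$ still happens to produce a $3$-cycle $c\to e\to w\to c$ in that situation, so this is repairable, but as written the case analysis is wrong. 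Second, and more seriously, the termination argument you yourself flag as ``the main obstacle'' is not supplied. The stated invariant --- that each rotation strictly decreases the distance from the $3$-cycle containing $c$ to the nearest leaf block --- is not justified and is not obviously the right one: the direction of each rotation is forced (toward the extra in-neighbour of $e$), which need not point toward the \emph{nearest} leaf, and the block decomposition itself changes under mutation. What one can actually show is that the successive vertices $d,e,w,w',\dots$ form a non-backtracking walk in a finite tree-like structure, which must stop, and that it can only stop when a detaching criterion holds; but none of this is verified in your sketch.

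The paper avoids all of this with a cleaner reduction in the valency-$2$ case: deleting $c$ together with the third arrow of its $3$-cycle (the one joining its two neighbours $c'$ and $c''$) disconnects $\Gamma$ into two components of $\MM^A$, each with $c'$ resp.\ $c''$ as a connecting vertex; the induction hypothesis straightens each component into a line rooted at $c'$ resp.\ $c''$ without touching $c$, and then the single sweep $\mu_{c'},\mu_{1'},\dots,\mu_{k_1'}$ pushes the triangle down one arm until the whole quiver is a linearly oriented $A_k$ with $c$ at the end. This keeps the entire argument inside the induction on $k$ and needs no separate termination lemma. I would either adopt that splitting, or, if you want to keep your detach-and-rotate strategy, supply the non-backtracking/finiteness argument precisely and correct the claim about second $3$-cycles.
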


\begin{proof}
We will prove the claim by induction on $k$. It is readily checked for small
values.

Assume first that $c$ has valency 1. Noting that the neighbour $c'$ of $c$ is
a connecting vertex for the quiver $\Gamma' = \Gamma \backslash \{ c\}$, we
have by induction that $\Gamma'$ can be mutated to a linearly oriented $A_{k-1}$
quiver without mutating at $c'$. The result looks like this after the
relabelling:
\[
\xymatrix{
c\ar@{-}[r]&1\ar[r]&2\ar[r]&\cdots\ar[r]&(k-2)\ar[r]&(k-1)
}
\]
If the edge between $c$ and 1 is correctly oriented, we are done. If not,
perform the sequence of mutations $\mu_1,\mu_2,...,\mu_{(k-2)},\mu_{(k-1)}$
and relabel to get the quiver in \eqref{eq:typeAquiver}.

Assume now the other possibility, namely that $c$ has valency 2. It is then
traversed by a 3-cycle. Again we note that the neighbouring vertices $c'$ and
$c''$ are connecting vertices for their respective components of the quiver
$\Gamma' = \Gamma \backslash \{ c, c'\to c''\}$, we can apply the induction
hypothesis and perform a sequence of mutations to produce the following
quiver:
\[
\xymatrix{
&c'\ar[dl]\ar[r]&1'\ar[r]&2'\ar[r]&\cdots\ar[r]&k'_1 \\
c\ar[dr]&&&&& \\
&c''\ar[uu]\ar[r]&1''\ar[r]&2''\ar[r]&\cdots\ar[r]&k''_2
}
\]
Now the sequence $\mu_{c'},\mu_{1'},\mu_{2'},...,\mu_{k'_1}$ and a
relabelling will yield the desired result.
\end{proof}

\begin{lem} \label{lem:typeI}
All quivers of Type I are mutation equivalent to $D_n$ quivers.
\end{lem}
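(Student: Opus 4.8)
The plan is to prove Lemma~\ref{lem:typeI} by reducing a Type~I quiver to the standard $D_n$ quiver via an explicit sequence of mutations, leaning on Lemma~\ref{lem:typeApiece} to handle the type $A$ part. Recall that a Type~I quiver $Q$ consists of two valency-one vertices $a,b$, both joined by an arrow to (or from) a common vertex $c$, together with the quiver $Q' = Q\backslash\{a,b\} \in \MM^A_{n-2}$ in which $c$ is a connecting vertex. The first step is to apply Lemma~\ref{lem:typeApiece} to the component $Q'$ with its connecting vertex $c$: this gives a sequence of mutations, none of which is $\mu_c$, transforming $Q'$ into the linearly oriented $A_{n-2}$ quiver of \eqref{eq:typeAquiver} with $c$ relabelled as the endpoint $1$. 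Crucially, because $\mu_c$ never appears and $a,b$ are only attached at $c$, none of these mutations touches $a$ or $b$ or the arrows joining them to $c$; so after this sequence the whole quiver $Q$ has become
\[
\xymatrix{
a\ar@{-}[dr]&&&&&\\
&c\ar@{-}[r]&2\ar[r]&3\ar[r]&\cdots\ar[r]&(n-2)\\
b\ar@{-}[ur]&&&&&
}
\]
where I have left the orientations of the edges $a\text{--}c$, $b\text{--}c$, and $c\text{--}2$ unspecified for the moment.

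The second step is to fix up the remaining edge orientations. This is exactly the underlying graph of $D_n$ (vertices $a,b$ attached to the trivalent vertex $c$, and a path out to $n-2$). Since $D_n$ is a tree, the Lemma at the end of Section~\ref{sec:quivermutation} guarantees that any two quivers with this underlying graph are mutation equivalent; so once the underlying graph is correct we are already done as an existence statement. If one wants the mutations written out explicitly, I would proceed as in the proof of Lemma~\ref{lem:typeApiece}: if the edge $c\text{--}2$ points the wrong way, mutate along $\mu_2,\mu_3,\dots,\mu_{(n-2)}$ to flip the orientation of the tail of the path without disturbing the vertices $a,b,c$, and if either of the short edges $a\text{--}c$ or $b\text{--}c$ is reversed relative to \eqref{eq:typeDquiver}, mutate at $a$ or $b$ (each a valency-one vertex, so $\mu_a$ and $\mu_b$ simply reverse the single incident arrow and create no new arrows).

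The only genuine subtlety—and the step I would check most carefully—is the claim that the mutations produced by Lemma~\ref{lem:typeApiece} really leave $a$, $b$, and their arrows to $c$ completely untouched. This is where the hypothesis ``$\mu_c$ does not appear in the sequence'' does the essential work: a mutation $\mu_v$ alters only arrows incident with $v$, and it can create a new arrow between two vertices $j,k$ only if both lie adjacent to $v$. Since $a$ and $b$ have valency one in $Q$, with $c$ their sole neighbour, no mutation at a vertex other than $a$, $b$, or $c$ can introduce an arrow involving $a$ or $b$; and since $\mu_c$ is excluded and the $A$-type sequence involves only vertices of $Q'$, the pendant structure at $c$ is preserved throughout. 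I would spell this out explicitly to justify that applying the $A_{n-2}$ reduction inside the larger quiver $Q$ has the same effect on $Q'$ as it does in isolation, after which the orientation adjustments above complete the proof.
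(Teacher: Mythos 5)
Your proposal is correct and follows essentially the same route as the paper: apply Lemma~\ref{lem:typeApiece} to $Q'$ avoiding $\mu_c$ (so $a$, $b$, and their arrows to $c$ are untouched), obtain a quiver with underlying graph $D_n$, and conclude via the tree lemma of Section~\ref{sec:quivermutation}. Your extra justification of why the $A$-type mutations leave the pendant vertices alone, and the explicit orientation fixes, only make explicit what the paper leaves implicit.
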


\begin{proof}
If we have a quiver $Q$ as in the description of Type I, then $c$ is
connecting for $Q'$. An application Lemma \ref{lem:typeApiece} shows that we
can mutate $Q'$ to a quiver with underlying graph $A_{n-2}$ by mutating in
vertices not equal to $c$. This induces a mutation of $Q$ into a quiver
with underlying graph $D_n$.
\end{proof}

\begin{lem} \label{lem:typeII}
All quivers of Type II are mutation equivalent to $D_n$ quivers.
\end{lem}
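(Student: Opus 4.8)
The plan is to follow the template of Lemma \ref{lem:typeI}: first straighten the two type-$A$ parts into canonical linear form, and then dispatch the small remaining core by a fixed finite sequence of mutations. Since $c$ is a connecting vertex for the component $Q'\in\MM^A$ and $d$ is a connecting vertex for $Q''\in\MM^A$, I would apply Lemma \ref{lem:typeApiece} separately to $Q'$ and to $Q''$. This produces two sequences of mutations, neither involving $\mu_c$ nor $\mu_d$, after which $Q'$ and $Q''$ have both been turned into linearly oriented quivers as in \eqref{eq:typeAquiver}, with $c$ (respectively $d$) playing the role of the endpoint labelled $1$.

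The point of avoiding $\mu_c$ and $\mu_d$ is that the straightening step then leaves the four-vertex core $\{a,b,c,d\}$ completely untouched. Indeed, $a$ and $b$ have valency $2$ and are incident only to $c$ and $d$, and every vertex of $Q'$ other than $c$ is separated from $a$, $b$, $d$ once the arrow $d\to c$ is deleted; hence no mutation carried out inside $Q'$ is incident to any arrow of the core, and symmetrically for $Q''$. After this step $Q$ has therefore been brought into a normal form consisting of the fixed core on $a,b,c,d$ together with one linearly oriented $A$-tail emanating from $c$ and one from $d$, and the argument no longer depends on the lengths of the two tails.

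It then remains to mutate this normal form into a quiver with underlying graph $D_n$. The natural first move is $\mu_a$, which collapses one of the two $3$-cycles of the core: reversing the two arrows at $a$ creates a new arrow between $c$ and $d$ that cancels against the existing $d\to c$, so that $a$ is repositioned and the core is simplified. A short further sequence of mutations along the core then pushes the pair $a,b$ to one end, producing a quiver that either is literally $D_n$ or is recognisably of Type I, in which case Lemma \ref{lem:typeI} finishes the argument. The main obstacle is exactly this final sequence: because $a$ and $b$ share the arrow $d\to c$, one must keep careful track of which arrows are created and which $2$-cycles cancel at each mutation, and then verify that the resulting underlying graph is the tree $D_n$ (or that the defining conditions of Type I hold). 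This is a delicate but purely finite check, independent of the two tail lengths, and it is where the bulk of the work lies.
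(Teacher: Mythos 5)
Your first step---straightening $Q'$ and $Q''$ with Lemma \ref{lem:typeApiece} while avoiding $\mu_c$ and $\mu_d$, and checking that this leaves the core on $\{a,b,c,d\}$ untouched---is exactly how the paper's proof begins. The gap is in what follows. First, $\mu_a$ does not simplify the core: the new arrow $c\to d$ cancels against $d\to c$ and the two glued triangles become the directed $4$-cycle $d\to a\to c\to b\to d$, i.e.\ a Type III configuration; the paper in fact handles Type III by mutating at $a$ to get \emph{back} to Type II, so this move goes in the wrong direction. Second, and more seriously, the claim that a finite sequence of mutations confined to the core, independent of the tail lengths, can produce a quiver that is ``literally $D_n$'' cannot be correct when both tails are long: mutations at $a,b,c,d$ never touch the arrows among $1,\dots,m-1$ or among $m+2,\dots,n-4$, so the far endpoints of the two tails remain two leaves with no common neighbour, whereas $D_n$ has its two valency-one ``antenna'' vertices attached to the same vertex. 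The paper's sequence is $\mu_d,\mu_m,\mu_{m-1},\dots,\mu_1$, whose length depends on $m$: each mutation slides the pair of glued triangles one step further down the $d$-tail, and the final mutation $\mu_1$ turns $a$ and $b$ into the two valency-one vertices of \eqref{eq:typeDquiver}.

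Your fallback---reach a Type I quiver and invoke Lemma \ref{lem:typeI}---is the salvageable part of the plan, and it can be realised in a single move, though not the one you propose: mutating the normal form at $c$ cancels $c\to a$ and $c\to b$ against the new arrows $d\to a$ and $d\to b$, leaving $a$ and $b$ as valency-one vertices attached to $c$, with $Q\backslash\{a,b\}$ a quiver in $\MM^A_{n-2}$ (one oriented $3$-cycle through $c$, $d$ and $m+1$, with the two linear tails hanging off $d$ and $m+1$) for which $c$ is connecting. But this identification and verification is precisely what you defer as ``where the bulk of the work lies''; as written, the proposal neither exhibits a mutation sequence that works nor checks that its output is $D_n$ or of Type I, so the essential content of the lemma is missing.
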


\begin{proof}
Let $Q$ be a quiver as in the description of Type II. The vertices $c$ and $d$
are connecting for $Q'$ and $Q''$, so we can apply Lemma \ref{lem:typeApiece}
to each of these. Thus we can mutate $Q$ into the following quiver, for some 
$0\leq m\leq n-4$:
\[
\xymatrix{
&&&&b\ar[dl]&&&& \\
1&\cdots\ar[l]&m\ar[l] &d\ar[l]\ar[rr]&&c\ar[r]\ar[ul]\ar[dl]
&(m+1)\ar[r]&\cdots\ar[r]&(n-4) \\
&&&&a\ar[ul]&&&& 
}
\]
The sequence $\mu_d,\mu_m,\mu_{(m-1)},...,\mu_1$ of mutations will now result
in a quiver with underlying graph $D_n$.
\end{proof}

\begin{lem} \label{lem:typeIII}
All quivers of Type III are mutation equivalent to $D_n$ quivers.
\end{lem}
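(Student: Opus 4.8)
The plan is to reduce Type III to Type II and then invoke Lemma \ref{lem:typeII}. The key observation is that a single mutation at one of the two valency-two vertices of the directed 4-cycle turns a Type III quiver into a Type II quiver. Concretely, I would orient and label the central 4-cycle so that it reads $a\to c\to b\to d\to a$, where $a$ and $b$ are the vertices whose removal disconnects $Q$ and $c,d$ are the connecting vertices for the components $Q'$ and $Q''$. I would then perform the mutation $\mu_a$.

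First I would compute $\mu_a(Q)$ locally. Since the 4-cycle is a full subquiver and $c,d$ are the only neighbours of $a$, the mutation alters only arrows among $\{a,c,d\}$: the arrows $a\to c$ and $d\to a$ are reversed to $c\to a$ and $a\to d$, and the composable pair $d\to a\to c$ forces a new arrow $d\to c$. The resulting configuration on $\{a,b,c,d\}$ consists of the two oriented 3-cycles $a\to d\to c\to a$ and $b\to d\to c\to b$, sharing the arrow $d\to c$, with $a$ and $b$ now the apex vertices of valency two.

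Next I would check that this is a genuine Type II quiver. Because $\mu_a$ touches no arrow incident to a vertex of $Q'$ or $Q''$ other than those already incident to $c$ and $d$, the subquivers $Q'$ and $Q''$ are unchanged, and $c,d$ remain connecting vertices for them. Removing $\{a,b,d\to c\}$ from $\mu_a(Q)$ therefore yields exactly $Q'\overset{\cdot}{\cup}Q''$, so every defining condition of Type II is met (this covers the degenerate cases where $Q'$ or $Q''$ is a single vertex, by the first Remark). Lemma \ref{lem:typeII} then shows $\mu_a(Q)$, and hence $Q$, is mutation equivalent to $D_n$.

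The only real obstacle is verifying that $\mu_a$ lands precisely in Type II: that no spurious arrows appear — in particular that the new arrow $d\to c$ does not collide with a pre-existing $c$–$d$ arrow, which it cannot, since the 4-cycle is full and $c,d$ are non-adjacent within it — that the valencies of $a$ and $b$ become exactly two, and that the disconnection and connecting-vertex properties survive. All of this is local and routine once the orientation of the central 4-cycle is fixed.
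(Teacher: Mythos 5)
Your proposal is correct and follows exactly the paper's argument: a single mutation at $a$ turns the directed 4-cycle into the two 3-cycles sharing the arrow $d\to c$ that characterise Type II, after which Lemma \ref{lem:typeII} finishes the job. The paper states this in one line; your verification of the local computation and of the preservation of the connecting-vertex conditions is the routine detail it leaves implicit.
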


\begin{proof}
Let $Q$ be any quiver $Q$ as in the description of Type III. If we mutate at
the vertex $a$, we get a quiver $\mu_a(Q)$ of Type II. By Lemma
\ref{lem:typeII}, we have that $\mu_a(Q)$ is mutation equivalent to $D_n$, and
therefore so is $Q$.
\end{proof}

\begin{lem} \label{lem:cycle}
The oriented cycle of length $n$ is mutation equivalent to $D_n$.
\end{lem}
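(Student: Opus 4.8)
The plan is to exhibit an explicit sequence of mutations carrying the oriented $n$-cycle to a quiver whose underlying graph is $D_n$; since two quivers with the same tree as underlying graph are mutation equivalent (Section \ref{sec:quivermutation}) and $D_n$ is a tree, reaching the underlying graph $D_n$ suffices. I label the cycle $1\to 2\to\cdots\to n\to 1$ and mutate successively at the vertices $1,2,\dots,n-3$, finishing with one mutation at $n$.

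First I would record the effect of a single such mutation. A direct check shows that $\mu_1$ turns the $n$-cycle into an $(n-1)$-cycle on the vertices $\{2,\dots,n\}$ together with a single spike: the $3$-cycle $1\to n\to 2\to 1$ sharing the edge $n\to 2$ with the shortened cycle. The key step is then an induction on $k$ establishing the invariant that after performing $\mu_1,\mu_2,\dots,\mu_k$ the quiver consists of a central $(n-k)$-cycle on $\{k+1,\dots,n\}$, a spike $3$-cycle $k\to n\to (k+1)\to k$ attached along the edge $n\to(k+1)$, and a linearly oriented tail $1\to 2\to\cdots\to k$ meeting the spike at its tip $k$. Passing from $k$ to $k+1$ amounts to bookkeeping the reversed arrows at $k+1$, the two arrows $n\to(k+2)$ and $n\to k$ created by the mutation rule, and the cancellation of the $2$-cycle on $k\to n$; one checks that $k+2\neq n$ throughout, so no spurious cancellation occurs on the central cycle, and the tail simply grows by the new arrow $k\to(k+1)$.

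Taking $k=n-3$ leaves a central $3$-cycle $(n-2)\to(n-1)\to n\to(n-2)$ and a spike $3$-cycle $(n-3)\to n\to(n-2)\to(n-3)$ glued along $n\to(n-2)$, with the tail $1\to\cdots\to(n-3)$ attached at $n-3$. I would finish with one further mutation at the shared vertex $n$, which has valency $3$. Its three incident arrows reverse, the mutation rule creates $(n-1)\to(n-2)$ and $(n-3)\to(n-2)$, and both of these cancel against the arrows $(n-2)\to(n-1)$ and $(n-2)\to(n-3)$ already present; thus both triangles collapse at once. What remains is the acyclic quiver with arrows $n\to(n-1)$, $(n-2)\to n$, $n\to(n-3)$ and tail $1\to\cdots\to(n-3)$, whose underlying graph is a path $1-2-\cdots-(n-3)-n$ with the two leaves $n-1$ and $n-2$ attached at $n$, that is, $D_n$.

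The main obstacle I anticipate is purely the combinatorial bookkeeping in the inductive step: at each mutation several arrows are reversed, two new arrows are added, and exactly one $2$-cycle must be cancelled, so one must verify carefully that the single spike and the linear tail are preserved and that the central cycle shrinks by exactly one vertex without any unintended $2$-cycle appearing. The case $n=4$, where the tail is empty and the sequence reduces to $\mu_1$ followed by $\mu_4$, also follows directly from the Type III description via Lemma \ref{lem:typeIII}, and serves as a useful sanity check.
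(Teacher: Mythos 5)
Your proof is correct and takes essentially the same approach as the paper: both exhibit an explicit sequence of mutations connecting the oriented $n$-cycle to a quiver with underlying graph $D_n$ (the paper runs the sequence $\mu_{(n-1)},\mu_{(n-2)},\dots,\mu_1$ from the $D_n$ quiver to the cycle, while you run a sequence in the opposite direction and track the intermediate quivers with an inductive invariant). Your bookkeeping of the shrinking central cycle, the single spike and the growing tail checks out, so the extra detail is a valid elaboration rather than a different method.
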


\begin{proof}
Starting with the quiver \eqref{eq:typeDquiver}, we get an oriented cycle by
performing the following sequence of mutations:
$\mu_{(n-1)},\mu_{(n-2)},...,\mu_1$.
\end{proof}

\begin{lem}
All quivers of Type IV are mutation equivalent to $D_n$ quivers.
\end{lem}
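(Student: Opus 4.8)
The plan is to reduce an arbitrary Type IV quiver to an oriented $n$-cycle and then invoke Lemma \ref{lem:cycle}.

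First I would straighten each type $A$ piece. Since each spike vertex $c_\alpha$ is a connecting vertex for its component $Q'$, Lemma \ref{lem:typeApiece} lets me mutate each $Q'$ into a linearly oriented $A$-quiver without ever mutating at $c_\alpha$; under the relabelling $c_\alpha$ becomes the initial vertex $1$, so the resulting ``tail'' is $c_\alpha \to t_1 \to t_2 \to \cdots \to t_m$ with all arrows pointing away from the central cycle. These straightenings take place in disjoint components and leave the central cycle and all spikes untouched. After this step, $Q$ consists of a central oriented cycle, some spikes, and a linear tail hanging off each spike vertex.

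Next I would absorb the spikes one at a time. The key local computation is the following: if $a \to b$ is a central-cycle arrow carrying a spike $a \to b \to c \to a$ and $c$ additionally has an outgoing tail arrow $c \to t_1$, then mutating at $c$ deletes the arrow $a \to b$ (via the $2$-cycle created by the path $b \to c \to a$), inserts $c$ into the central cycle as $a \to c \to b$, and regenerates a spike $c \to b \to t_1 \to c$ one step further out, with the remaining tail $t_1 \to \cdots \to t_m$ intact. Iterating the mutations $\mu_c, \mu_{t_1}, \ldots, \mu_{t_m}$ thus walks the spike outward along the tail, splicing one vertex into the central cycle at each step, until a final mutation at the now-bare spike vertex $t_m$ absorbs it too. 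The net effect is that the $m+1$ vertices $c, t_1, \ldots, t_m$ are inserted into the cycle and the spike disappears. Because this procedure alters only the single arrow $a \to b$ together with the vertices of its own spike and tail, the spikes can be treated independently. Once every spike has been absorbed, no spikes or tails remain and all $n$ vertices lie on a single oriented cycle, so Lemma \ref{lem:cycle} finishes the proof.

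The step I expect to demand the most care is the local mutation computation above: verifying that mutation at the spike vertex really does cancel the cycle arrow $a \to b$, lengthen the cycle by one, and regenerate a spike with the tail shortened by one, since the entire reduction rests on this single move being correct. The remaining work---the straightening via Lemma \ref{lem:typeApiece} and the bookkeeping checking that the absorbed vertices sum to $n$---is routine.
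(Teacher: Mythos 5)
Your proposal is correct and follows essentially the same route as the paper: straighten each type $A$ piece via Lemma \ref{lem:typeApiece} and then apply the mutation sequence $\mu_c,\mu_{t_1},\dots,\mu_{t_m}$ to convert each spike and its tail into a directed path spliced into the central cycle, finishing with Lemma \ref{lem:cycle}. The only difference is that you spell out the local computation (cancellation of $a\to b$ and regeneration of the spike one step further out) which the paper asserts without detail, and your verification of that step is accurate.
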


\begin{proof}
Let $Q$ be a quiver as in the description of Type IV. By Lemma \ref{lem:cycle}
it is sufficient to show that $Q$ is mutation equivalent to an oriented cycle.

Let $a\to b\to c\to a$ be a spike in $Q$ with $a\to b$ on the central cycle,
and let $Q_c$ be the corresponding type $A$ piece. Since $c$ is connecting for
$Q_c$, we can mutate $Q_c$ (without mutating at $c$) to a linearly oriented
$A_k$ quiver, for some $k$. This induces an iterated mutation of $Q$ resulting
in a quiver with a full subquiver looking like this:
\[
\xymatrix{
a\ar[dd]&&&&& \\
&c\ar[ul]\ar[r]&1\ar[r]&2\ar[r]&\cdots\ar[r]&k \\
b\ar[ur]&&&&& 
}
\]
Performing the mutations $\mu_c,\mu_1,\mu_2,...,\mu_k$ on this quiver yields a
quiver which is just a directed path from $a$ to $b$. This induces an iterated
mutation on $Q$ which in effect replaces the spike involving $c$ and the
corresponding type $A$ subquiver $Q_c$ with a directed path from $a$ to $b$.

Doing this to all the spikes and the type $A$ pieces, we get an oriented
$n$-cycle, which is what we wanted.
\end{proof}

We now put the pieces together to prove the main result:

\begin{proof}[Proof of Theorem \ref{thm:mutationclass}.]
By the lemmas above, all quivers in $\MM^D_n$ are mutation equivalent to
$D_n$. It remains to show that $\MM^D_n$ is closed under mutation.

Let $Q$ be some quiver in $\MM^D_n$ and let $v$ be some vertex. If $v$ is
inside one of the type $A$ pieces $Q',Q'',...$ as in the descriptions
in Section \ref{sec:mutationclass}, but not the connecting vertex connecting
the piece to the rest of the quiver $Q$, then the mutated quiver $\mu_v(Q)$ is
still of the same type. It therefore suffices to check what happens when we
mutate at the other vertices, and we will consider four cases, according to
the type of $Q$.

\textbf{Case I.} Suppose that $Q$ is of Type I. If $v$ is one of the vertices
$a,b$ (as in the description), then $\mu_a(Q)$ is obviously also of Type
I, as $\mu_v$ only reverses the incident arrow. Assume therefore that $v$ is
the vertex $c$, connecting for $Q'$. If $v$
has valency 2 in $Q'$, then the quiver $\mu_v(Q)$ is of Type II or Type IV
(with a central cycle of length 3), depending on the orientation of the arrows
between $v$ and $a,b$. If $v$ has valency 1 in $Q'$, then the quiver
$\mu_v(Q)$ is either of Type I (if $v$ is a source or a sink in $Q$), Type II
or Type IV (with a central cycle of length 3), depending on the orientations.

\textbf{Case II.} Let now $Q$ be of Type II. For $v$ equal to $a$ or $b$ in the
description of the type, we have that $\mu_v(Q)$ is of Type III. If $v$ is
$c$ and the valency of $v$ in $Q'$ is 2, then then $\mu_v(Q)$ is also of Type
II. If the valency is 1, then $\mu_v(Q)$ is either of Type I or Type II,
depending on the orientation of the arrow incident with $v$ in $Q'$. See
Figure \ref{fig:mut2}. The case when $v$ is $d$ is similar.
\begin{figure}
\centering
\includegraphics[height=3.2cm]{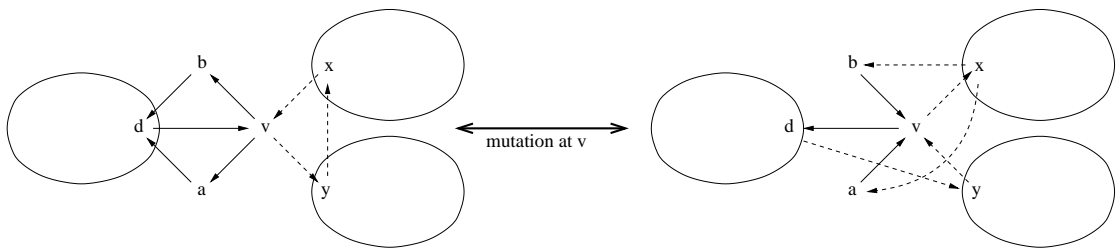}
\caption{Mutation of a Type II quiver. The blobs represent type $A$
  subquivers, and the dotted arrows indicate that they may not appear. If the
  $x$-blob is not there, $\mu_v(Q)$ is Type I. Otherwise, it is Type II.}
  \label{fig:mut2}
\end{figure}

\textbf{Case III.} Suppose $Q$ is of Type III. If $v$ is one of the vertices
$a,b$ in the description, then $\mu_v(Q)$ is of Type II. So assume $v$ is
$c$. (The case of $v$ equal to $d$ is the same.) Then $\mu_v(Q)$ is of Type IV
with a central cycle of length 3. See Figure \ref{fig:mut1}.
\begin{figure}
\centering
\includegraphics[height=3.2cm]{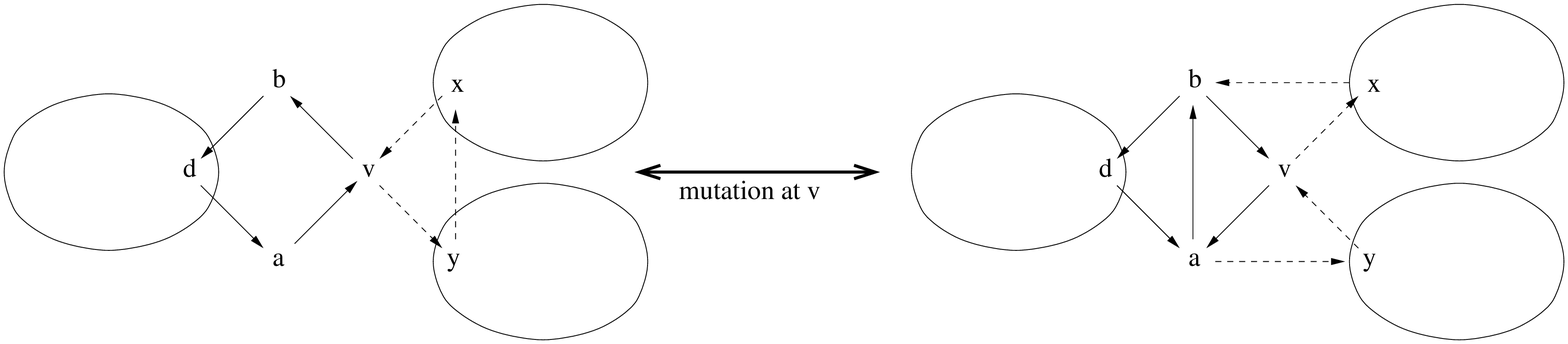}
\caption{If we mutate at $v$, the Type III quiver $Q$ on the left is mutated
  to the Type IV quiver $\mu_v(Q)$ on the right, where the central cycle has
  length 3.} \label{fig:mut1}
\end{figure}

\textbf{Case IV.} Finally, let $Q$ be a quiver of Type IV. First consider a
vertex $v$ which is on the central cycle. If the length of the central cycle
is 3, then $\mu_v(Q)$ is of Type III if the arrow on the opposite side of the
central cycle is part of a spike and Type I if not. See Figure
\ref{fig:mut3}. (Recall our assumtion that $n\geq 4$.) If the length of the
central cycle is 4 or more, then $\mu_v(Q)$ is also of Type IV, and has a
central cycle one arrow shorter than for $Q$.
\begin{figure}
\includegraphics[height=3cm]{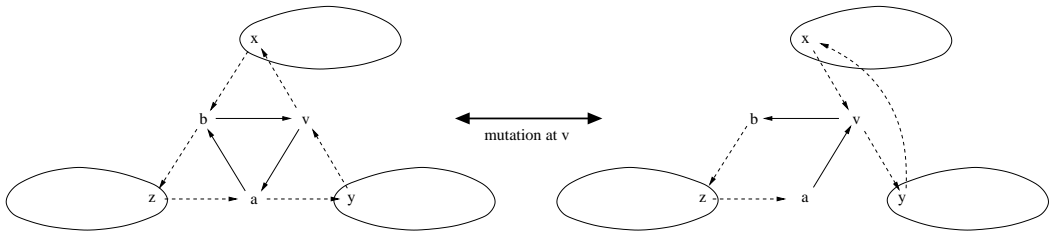}
\caption{If $Q$ (on the left) is Type IV with a central cycle of length 3,
  then $\mu_v(Q)$ is Type I or III, depending on whether there is an opposite
  spike.} \label{fig:mut3}
\end{figure}

If $v$ is a vertex on a spike, but not on the central cycle, then $\mu_v(Q)$
is also Type IV, with a central cycle one arrow longer than for $Q$.

We have now seen that for any quiver $Q\in \MM^D_n$ and any vertex $v$, the
mutated quiver $\mu_v(Q)$ is also in $\MM^D_n$, so the proof is finished.
\end{proof}

\section*{Acknowlegdgement}
The author would like to thank his supervisor Aslak Bakke Buan for many
interesting discussions.

\end{document}